\def\pmod #1{\ ({\rm{mod}}\ #1)}
\def\Z{\mathbb Z}
\def\Q{\mathbb Q}
\def\C{\mathbb C}
\def\1{{\mathbf 1}}
\def\pmod #1{\ ({\rm{mod}}\ #1)}
\theoremstyle{plain}
\newtheorem{Thm}{Theorem}
\newtheorem{Lem}{Lemma}
\newtheorem{Conj}{Conjecture}
\theoremstyle{definition}
\newtheorem*{Ack}{Acknowledgment}
\theoremstyle{remark}
\begin{document}

\title{Some $3$-adic congruences for binomial sums}
\author{Yong Zhang}
\email{yongzhang1982@163.com}
\address{Department of Basic Course, Nanjing Institute of Technology,
Nanjing 211167, People's Republic of China}
\author{Hao Pan}
\email{haopan79@yahoo.com.cn}
\address{Department of Mathematics, Nanjing University,
Nanjing 210093, People's Republic of China}
%\keywords{}
\subjclass[2010]{Primary 11B65; Secondary 05A10}
%\thanks{The author is supported by National Natural Science Foundation of China (Grant No. 10901078).}
\begin{abstract}
We prove some $3$-adic congruences for binomial sums, which were conjectured by Sun.
\end{abstract}
\maketitle

\section{Introduction}
\setcounter{equation}{0}
\setcounter{Thm}{0}
\setcounter{Lem}{0}
\setcounter{Cor}{0}
\setcounter{Conj}{0}

For a non-zero integer $n$ and a prime $p$, let $\nu_p(n)$ denote the $p$-adic order of $n$, i.e., $\nu_p(n)$ is the largest integer such that $p^{\nu_p(n)}\mid n$.
In \cite{SSZ}, Strauss, Shallit and Zagier proved that for any positive integer $n$,
\begin{equation}\label{ssz}
\nu_3\bigg(\sum_{k=0}^{n-1}\binom{2k}{k}\bigg)=2\nu_3(n)+\nu_3\bigg(\binom{2n}{n}\bigg).
\end{equation}
Recently Sun \cite{SSZ} showed that
\begin{equation}\label{sun}
\nu_p\bigg(\sum_{k=0}^{n-1}\frac{1}{m^{k}}\binom{2k}{k}\bigg)\geq\nu_p(n)\quad\text{and}\quad \nu_p\bigg(\sum_{k=0}^{n-1}\frac{(-1)^k}{m^{k}}\binom{2k}{k}\binom{n-1}{k}\bigg)\geq\nu_p(n),
\end{equation}
where $m$ is an integer and $p$ is an odd prime dividing $m-4$. Furthermore, he also proposed several conjectures on the $3$-adic orders of the above two binomial sums.
\begin{Conj}
\label{sunc}
Let $m$ be integer with $m\equiv 1\pmod{3}$.\medskip

\noindent{\rm (i)} For every positive integer $n$, we have
\begin{equation}
\label{scc1}
\nu_{3}\bigg(\frac{1}{n}\sum_{k=0}^{n-1}\frac{\binom{2k}{k}}{m^{k}}\bigg)\geq
min\{\nu_{3}(n), \nu_{3}(m-1)-1\}.
\end{equation}
For any integer $a\geq \nu_{3}(m-1)$, we have
\begin{equation}
\label{scc2}
\frac{1}{3^{a}}\sum_{k=0}^{3^{a}-1}\frac{1}{m^{k}}\binom{2k}{k}\equiv\frac{m-1}{3}\pmod{3^
{\nu_{3}(m-1)}}.
\end{equation}
\noindent{\rm (ii)} For every positive integer $n$, we have
\begin{equation}
\label{scc3}
\nu_{3}\bigg(\frac{1}{n}\sum_{k=0}^{n-1}\frac{(-1)^k} {m^k}\binom{n-1}{k}
\binom{2k}{k}\bigg)\geq
min\{\nu_{3}(n), \nu_{3}(m-1)\}-1.
\end{equation}
For any integer $a>\nu_{3}(m-1)$, we have
\begin{equation}
\label{scc4}
\frac{1}{3^{a}}\sum_{k=0}^{3^{a}-1}\frac{(-1)^k}{m^k}\binom{3^{a}-1}{k}\binom{2k}{k}
\equiv-\frac{m-1}{3}\pmod{3^
{\nu_{3}(m-1)}}.
\end{equation}
\noindent{\rm (iii)} For any integer $a\geq 2$, we have
\begin{equation}
\label{scc5}
\frac{1}{3^{a}}\sum_{k=0}^{3^{a}-1}(-1)^k\binom{3^{a}-1}{k}
\binom{2k}{k}\equiv-3^{a-1}\pmod{3^{a}}.
\end{equation}
\end{Conj}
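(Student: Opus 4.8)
\emph{Step 1 (reduction to a central trinomial coefficient).} Using $\binom{2k}{k}=[x^{0}]\,(x+2+x^{-1})^{k}$ (constant term of a Laurent polynomial) and interchanging the two sums,
\[
\sum_{k=0}^{3^{a}-1}(-1)^{k}\binom{3^{a}-1}{k}\binom{2k}{k}=[x^{0}]\bigl(-1-x-x^{-1}\bigr)^{3^{a}-1}=(-1)^{3^{a}-1}[x^{3^{a}-1}](1+x+x^{2})^{3^{a}-1}=T_{3^{a}-1},
\]
where $T_{n}:=[x^{n}](1+x+x^{2})^{n}$; the sign $(-1)^{3^{a}-1}$ equals $+1$ since $3^{a}-1$ is even. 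So it suffices to prove $T_{3^{a}-1}\equiv-3^{2a-1}\pmod{3^{2a}}$ for $a\ge2$.

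\emph{Step 2 (splitting off the main term).} I would use the elementary identity $(1+x+x^{2})^{3}=(1-x^{3})^{2}+3x(1+x+x^{2})^{2}$. Raising it to the power $3^{a-1}$, dividing by $1+x+x^{2}$, and reading off the coefficient of $x^{3^{a}-1}$ yields
\[
T_{3^{a}-1}=\sum_{j=0}^{3^{a-1}}\binom{3^{a-1}}{j}3^{j}\,[x^{3^{a}-1-j}](1+x+x^{2})^{2j-1}(1-x^{3})^{2(3^{a-1}-j)}.
\]
The $j=0$ term is $[x^{3^{a}-1}]\bigl((1-x^{3})^{2\cdot3^{a-1}}/(1+x+x^{2})\bigr)$ and vanishes: as $1/(1+x+x^{2})=(1-x)\sum_{i\ge0}x^{3i}$, every monomial occurring there has degree $\not\equiv2\pmod3$, whereas $3^{a}-1\equiv2\pmod3$. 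A short computation shows the $j=1$ term equals exactly $3^{a}\binom{2(3^{a-1}-1)}{3^{a-1}-1}$.

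\emph{Step 3 (killing the tail --- the main obstacle).} The heart of the argument is to show that every term with $j\ge2$ lies in $3^{2a}\Z_{3}$. By Kummer's theorem, $\nu_{3}\bigl(3^{j}\binom{3^{a-1}}{j}\bigr)=a-1+(j-\nu_{3}(j))$, so it is enough to bound $\nu_{3}$ of the coefficient $c_{j}:=[x^{3^{a}-1-j}](1+x+x^{2})^{2j-1}(1-x^{3})^{2(3^{a-1}-j)}$ below by $a+1-(j-\nu_{3}(j))$. Since $j-\nu_{3}(j)\ge2$ for $j\ge2$, with equality only for $j\in\{2,3\}$, the requirement is $\nu_{3}(c_{j})\ge a-1$ in those two cases and $\le a-3$ for $j\ge4$. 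Rewriting $c_{j}=[x^{3^{a}-1-j}](1-x^{3})^{2\cdot3^{a-1}-1}(1-x)^{-(2j-1)}$ and using that $\binom{2\cdot3^{a-1}-1}{i}$ is a $3$-adic unit for $0\le i<3^{a-1}$, one reduces the estimate to $3$-adic orders of central-binomial numbers $\binom{2(3^{a-1}-s)}{3^{a-1}-s'}$ with $s,s'$ small; for $j\in\{2,3\}$, applying the basic identity once or twice more to $(1+x+x^{2})^{2j-1}$ exhibits $c_{j}$ as a small integer combination of such numbers (each of $3$-adic order $\ge a-2$ by Kummer) with compensating powers of $3$, and for $j\ge4$ the same reduction, iterated, gives the much weaker bound needed. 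Granting this,
\[
T_{3^{a}-1}\equiv3^{a}\binom{2(3^{a-1}-1)}{3^{a-1}-1}\pmod{3^{2a}}.
\]

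\emph{Step 4 (the central binomial coefficient modulo $3^{a}$).} It remains to prove $\binom{2(3^{a-1}-1)}{3^{a-1}-1}\equiv-3^{a-1}\pmod{3^{a}}$. Put $N=3^{a-1}-1$, whose base-$3$ digits are all $2$, so that those of $2N$ are $1,2,\dots,2,1$. Then $\nu_{3}(N!)=(3^{a-1}-2a+1)/2$, $\nu_{3}((2N)!)=3^{a-1}-a$, and $\nu_{3}\binom{2N}{N}=a-1$. By the generalized Wilson (Anton) congruence $n!/3^{\nu_{3}(n!)}\equiv(-1)^{\nu_{3}(n!)}\prod_{i}(n_{i}!)\pmod3$ (over the base-$3$ digits $n_{i}$ of $n$), applied to $n=N$ and $n=2N$,
\[
\frac{1}{3^{a-1}}\binom{2N}{N}\equiv\frac{(-1)^{3^{a-1}-a}\,2^{a-2}}{\bigl((-1)^{\nu_{3}(N!)}2^{a-1}\bigr)^{2}}=(-1)^{3^{a-1}-a}2^{-a}\equiv(-1)^{a+1}(-1)^{a}=-1\pmod3.
\]
Together with Step 3 this gives $T_{3^{a}-1}\equiv3^{a}(-3^{a-1})\equiv-3^{2a-1}\pmod{3^{2a}}$, as required. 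The bound $a\ge2$ is essential here: for $a=1$, $\binom{2N}{N}=\binom00=1\not\equiv-1\pmod3$. Only the tail estimate in Step 3 is genuinely delicate; Steps 1, 2 and 4 are bookkeeping with Kummer's and Wilson's theorems.
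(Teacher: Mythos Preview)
Your proposal addresses only part~(iii) of the conjecture; parts~(i) and~(ii) (equations (\ref{scc1})--(\ref{scc4})) are not touched at all. Restricting to~(iii), your route via the central trinomial coefficient $T_{3^{a}-1}$ and the cubic identity $(1+x+x^{2})^{3}=(1-x^{3})^{2}+3x(1+x+x^{2})^{2}$ is genuinely different from the paper's. The paper instead transforms the sum by the convolution identity
\[
\sum_{k=0}^{n}(-1)^{k}\binom{n}{k}\binom{2k}{k}=\frac{1}{4^{n}}\sum_{j=0}^{n}(-3)^{j}\binom{2j}{j}\binom{2(n-j)}{n-j},
\]
factors out $\binom{2(3^{a}-1)}{3^{a}-1}$, and after routine $3$-adic estimates reduces everything to the single fact that $\sum_{k\ge0}(-3)^{k}/(2k+1)=0$ in $\Q_{3}$, which holds because this series equals $\frac{1}{2\sqrt{-3}}\log_{3}\!\bigl(\frac{\sqrt{-3}-1}{2}\bigr)$ and $(\sqrt{-3}-1)/2$ is a cube root of unity. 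That one $3$-adic identity replaces the whole tail analysis you attempt in Step~3.

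Your Steps~1, 2 and~4 are correct (in particular the Anton--Wilson computation in Step~4 checks out). The genuine gap is Step~3. The assertion $\nu_{3}(c_{j})\ge a-1$ for $j\in\{2,3\}$ is true --- e.g.\ one further use of your cubic identity gives, with $N=3^{a-1}$,
\[
c_{2}=\binom{2N-2}{N-1}-9\binom{2N-4}{N-2},
\]
and the first summand carries exactly $a-1$ powers of~$3$ while the second carries at least~$a$ --- but you do not carry this out; ``with compensating powers of~$3$'' is not an argument. More seriously, for $4\le j<2a$ you still need the strictly positive lower bound $\nu_{3}(c_{j})\ge a+1-(j-\nu_{3}(j))$ (which for $j=4$ is $a-3$, not automatically~$0$), and ``the same reduction, iterated'' is not obviously uniform in~$j$: each application of the cubic identity peels off only one factor of $(1+x+x^{2})^{3}$, and controlling the resulting linear combinations of shifted central binomials for all $j$ simultaneously requires an induction you have not set up. Until Step~3 is made precise, the proof of~(iii) is incomplete; the paper's $3$-adic logarithm trick is precisely what circumvents this case analysis.
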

In this paper, we shall confirm Conjecture.
\begin{Thm}
All assertions of Conjecture \ref{sunc} are true.
\end{Thm}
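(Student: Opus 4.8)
The plan is to prove the five assertions of Conjecture \ref{sunc} essentially by reducing everything to two auxiliary ``master'' congruences: one for $S(n):=\sum_{k=0}^{n-1}m^{-k}\binom{2k}{k}$ and one for $T(n):=\sum_{k=0}^{n-1}(-1)^k m^{-k}\binom{n-1}{k}\binom{2k}{k}$, viewed as elements of the $3$-adic integers $\Z_3$ (after inverting $m$, which is a $3$-adic unit since $m\equiv 1\pmod 3$). The key structural tool is that both sums satisfy a first-order recurrence in $n$ with coefficients that are rational functions of $n$: $\binom{2k}{k}$ satisfies a hypergeometric recurrence in $k$, and after Abel summation $S(n)$ and $T(n)$ can be related to $S(\lceil n/3\rceil)$- and $T(\lceil n/3\rceil)$-type sums via a ``$3$-adic dilation'' (the Strauss--Shallit--Zagier argument for \eqref{ssz} is exactly of this shape). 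Concretely, I would first establish closed-form or near-closed-form identities: for $S$, use the generating function $\sum_k \binom{2k}{k}x^k=(1-4x)^{-1/2}$ together with partial summation to write $S(n)$ in terms of $\binom{2n}{n}$ and a tail; for $T$, recognize $\sum_k(-1)^k\binom{n-1}{k}\binom{2k}{k}x^k$ as a terminating ${}_2F_1$ which, by the Pfaff/Kummer transformation, telescopes, so that $T(n)$ has a genuinely explicit form (this is what makes part (iii), the $m$-free case, cleanest).

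Next I would set up a $3$-adic induction on $a=\nu_3(n)$. The heart of the matter is a single lemma: writing $d:=\nu_3(m-1)$, one shows
\[
\frac{1}{3^{a}}\sum_{k=0}^{3^{a}-1}\frac{\binom{2k}{k}}{m^{k}}\equiv \frac{m-1}{3}\pmod{3^{d}}\qquad(a\ge d),
\]
and the analogous statement for $T$. To do this, split the range $0\le k<3^a$ according to $k\bmod 3^{a-1}$ (or use Lucas' theorem on $\binom{2k}{k}$ modulo $3$ to locate which $k$ contribute), use that $m^{-3^{a-1}}\equiv 1\pmod{3^{a-1+d-1}}$ by lifting-the-exponent, and reduce the sum over a full block of length $3^{a-1}$ to the sum over $0\le k<3^{a-1}$ times a unit that is $\equiv 1$ to high enough $3$-adic precision. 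Iterating down to $a=d$ (or $a=1$) pins the value modulo $3^d$; combined with the $p$-adic valuation bounds \eqref{sun} of Sun (already available to us) and the Strauss--Shallit--Zagier identity \eqref{ssz} at $m=4$ as a sanity check, this gives \eqref{scc2} and \eqref{scc4} directly. The valuation statements \eqref{scc1} and \eqref{scc3} then follow from the same block-decomposition: for general $n$ with $\nu_3(n)=a$, grouping the terms in blocks of length $3^a$ shows each block contributes something divisible by $3^{\min\{a,d\}}$ (resp. $3^{\min\{a,d-1\}}$, matching the stated $\min\{\nu_3(n),\nu_3(m-1)-1\}$ after the $1/n$ normalization), with the leftover partial block handled by the case $a=0$ which is vacuous.

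Finally, part (iii) should drop out of part (ii): setting $m\to 1$ formally is not legitimate ($d=\nu_3(m-1)=\infty$), so instead I would redo the $T$-analysis with $m=1$ from the start, where the Pfaff transformation gives $\sum_{k=0}^{n-1}(-1)^k\binom{n-1}{k}\binom{2k}{k}$ an especially rigid form — I expect it equals (up to sign) a Catalan-type expression whose $3$-adic expansion is transparent, yielding $\equiv -3^{a-1}\pmod{3^a}$ at $n=3^a$ by the same induction, now with the ``precision gain per step'' being exactly one power of $3$. \textbf{The main obstacle} I anticipate is bookkeeping the $3$-adic precision in the block-reduction step: one must show that replacing $m^{-3^{a-1}j}$ by $1$ on the $j$-th block loses nothing modulo $3^d$, which requires the error $m^{-3^{a-1}}-1$ to have valuation at least $d$ *after* division by $3^a$ — i.e. $\nu_3(m^{3^{a-1}}-1)=a-1+d\ge a+d-1$, which is tight and leaves no slack, so the induction must be arranged so that the $\binom{2k}{k}$ factor supplies the missing power of $3$ exactly on the nose (via Kummer's theorem on carries in base $3$). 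Getting that accounting exactly right, rather than off by one, is where the real work lies.
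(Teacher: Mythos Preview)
Your proposal takes a genuinely different route from the paper, and the obstacle you flag at the end is real and, as far as I can see, not surmountable by the block-decomposition you describe. The difficulty is that the sum over the $j$-th block, $\sum_{r=0}^{3^{a-1}-1} m^{-r}\binom{2(j\cdot 3^{a-1}+r)}{j\cdot 3^{a-1}+r}$, is \emph{not} close to $S(3^{a-1})=\sum_{r}m^{-r}\binom{2r}{r}$ in any useful sense: the central binomial coefficients on shifted arguments bear no simple relation to the unshifted ones, so after replacing $m^{-j\cdot 3^{a-1}}$ by $1$ (which LTE does justify to precision $3^{a-1+d}$) you are left with three structurally different inner sums, not three copies of $S(3^{a-1})$. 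The extra power of $3$ you hope to extract from $\binom{2k}{k}$ via Kummer is not uniformly available either: for instance $\binom{2\cdot 3^{a-1}}{3^{a-1}}$ is a $3$-adic unit. So the induction does not close, and your precision deficit of one power of $3$ is genuine rather than a bookkeeping artifact.

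The paper sidesteps this entirely. For \eqref{scc1} and \eqref{scc2} it invokes the Sun--Tauraso identity
\[
m^{n-1}\sum_{k=0}^{n-1}\frac{1}{m^k}\binom{2k}{k}=\sum_{k=0}^{n-1}\binom{2n}{k}\,u_{n-k}(m-2,1),
\]
with $u_n(A,B)$ the Lucas sequence, together with a lemma showing $u_n(m-2,1)/n\equiv u_n(-1,1)/n+\tfrac{m-1}{3}\binom{n-1}{2}\pmod{3^{\nu_3(m-1)}}$, proved by expanding the Binet-type formula in powers of $m-1$; this reduces the general-$m$ sum to the $m=1$ sum, where \eqref{ssz} applies directly. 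For \eqref{scc3} and \eqref{scc4} a further identity of Sun rewrites $T(n)$ as an alternating sum of the $S(k)$'s, feeding back into the previous case. For \eqref{scc5} and the auxiliary congruence $\sum_k(-1)^k\binom{n-1}{k}\binom{2k}{k}\equiv 0\pmod{3^{2\nu_3(n)-1}}$ your Pfaff instinct is exactly right and is the paper's key transformation, yielding $\sum_k(-1)^k\binom{n-1}{k}\binom{2k}{k}=4^{1-n}\sum_k(-3)^k\binom{2k}{k}\binom{2(n-1-k)}{n-1-k}$; but there is no Catalan-type closed form waiting. Instead the paper reduces the right side to showing that $\sum_{k\ge 0}(-3)^k/(2k+1)=0$ in $\Q_3$, which holds because this series equals $\frac{1}{2\sqrt{-3}}\log_3\bigl(\tfrac{\sqrt{-3}-1}{2}\bigr)$, the $3$-adic logarithm of a primitive cube root of unity. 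That $p$-adic logarithm vanishing is the idea you are missing for part~(iii).
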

The proofs of (\ref{scc1})-(\ref{scc5}) will be given in the next sections.

\section{Proofs of (\ref{scc1}) and (\ref{scc2})}
\setcounter{equation}{0}
\setcounter{Thm}{0}
\setcounter{Lem}{0}
\setcounter{Cor}{0}
\setcounter{Conj}{0}

For $A,B\in\Z$, the Lucas sequence $\{u_n(A,B)\}$ are given by
$$
u_0(A,B)=0,\quad u_1(A,B)=1,\quad u_{n+1}(A,B)=Au_n(A,B)-Bu_{n-1}(A,B)\text{ for }n\geq 1.
$$
In particular, it is not difficult to check that $\{u_n(-1,1)\}_{n\geq 0}\in\{0,1,-1\}$ and $u_n(-1,1)\equiv n\pmod{3}$.
\begin{Lem} Suppose that $m\equiv1\pmod{3}$. Then we have
\begin{equation}\label{l1c1}
\frac{u_{n}(m-2,1)}{n}\equiv
\frac{u_{n}(-1,1)}{n}+\frac{m-1}{3}\binom{n-1}{2}
\pmod{3^{\nu_{3}(m-1)}}
\end{equation}
if $m\not=4$, and
\begin{equation}\label{l1c2}
\frac{u_{n}(2,1)}{n}\equiv
\frac{u_{n}(-1,1)}{n}
\pmod{3}.
\end{equation}
In particular, we always have
\begin{equation}\label{l1c3}
\frac{u_{n}(m-2,1)}{n}\equiv
\frac{u_{n}(-1,1)}{n}
\pmod{3^{\nu_{3}(m-1)-1}}.
\end{equation}
\end{Lem}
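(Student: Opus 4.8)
\emph{Proof plan.} Write $\delta=m-1$; since $m\equiv 1\pmod 3$ we have $\nu_3(\delta)\ge 1$. Abbreviate $a_n=u_n(m-2,1)$ and $b_n=u_n(-1,1)$. I would first clear denominators: multiplying \eqref{l1c1} through by $n$ and using the identity $n\binom{n-1}{2}=3\binom n3$, the assertion \eqref{l1c1} becomes the \emph{integral} congruence
\[
a_n\equiv b_n+(m-1)\binom n3\pmod{3^{\nu_3(\delta)+\nu_3(n)}}.
\]
Granting this, \eqref{l1c3} drops out at once, since the discarded term $(m-1)\binom n3$ has $3$-adic order $\ge\nu_3(\delta)+\nu_3(n)-1$ (because $\nu_3\binom n3\ge\nu_3(n)-1$ for every $n$). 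And \eqref{l1c2} is the case $m=4$: there $x^2-2x+1=(x-1)^2$ gives $u_n(2,1)=n$ outright, so the claim reduces to $b_n\equiv n\pmod 3$, a property of $u_n(-1,1)$ already recorded above.

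The main tool is the expansion of $a_n$ in powers of $\delta$. As $u_n(x,1)\in\Z[x]$ and $m-2=-1+\delta$, write $a_n=u_n(-1+\delta,1)=\sum_{j\ge 0}c_j^{(n)}\delta^j$ with $c_j^{(n)}\in\Z$ and $c_0^{(n)}=b_n$. From $\sum_{n\ge 0}u_n(x,1)t^n=t/(1-xt+t^2)$ and $1-(-1+\delta)t+t^2=(1+t+t^2)-\delta t$ one gets
\[
\sum_{n\ge 0}c_j^{(n)}t^n=f(t)^{j+1},\qquad f(t)=\frac{t}{1+t+t^2}=\frac{t(1-t)}{1-t^3};
\]
in particular $c_j^{(n)}=0$ for $n\le j$. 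I would then prove and assemble two facts: (a) $c_1^{(n)}\equiv\binom n3\pmod{3^{\nu_3(n)}}$, and (b) $\nu_3(c_j^{(n)})\ge\nu_3(n)-j+1$ for every $j\ge 2$. Given (a) and (b), in $a_n-b_n=\sum_{j\ge1}c_j^{(n)}\delta^j$ the $j=1$ term agrees with $(m-1)\binom n3$ modulo $3^{\nu_3(\delta)+\nu_3(n)}$ by (a), and each $j\ge 2$ term has order $\ge(\nu_3(n)-j+1)+j\,\nu_3(\delta)=\nu_3(\delta)+\nu_3(n)+(j-1)(\nu_3(\delta)-1)\ge\nu_3(\delta)+\nu_3(n)$ by (b); summing yields the integral congruence above.

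Fact (a) is an explicit computation: extracting $[t^{n-2}]$ from $f(t)^2=t^2(1-t)^2/(1-t^3)^2$ gives $c_1^{(n)}=\tfrac{n-1}{3},\ \tfrac{n+1}{3},\ -\tfrac{2n}{3}$ according as $n\equiv 1,2,0\pmod 3$, and for $3\mid n$ one finds $c_1^{(n)}-\binom n3=-\tfrac n6(n^2-3n+6)$, of $3$-adic order exactly $\nu_3(n)$ since $\nu_3(n^2-3n+6)=1$. Fact (b) is the heart of the matter. Fixing $j\ge 2$ and writing $n=3\nu$ (the statement is vacuous when $3\nmid n$), I would substitute $(1-t^3)^{-(j+1)}=\sum_{b\ge0}\binom{b+j}{j}t^{3b}$ and $(1-t)^{j+1}=\sum_{a=0}^{j+1}(-1)^a\binom{j+1}{a}t^a$ into the generating function and read off $[t^{3\nu-j-1}]$; this exhibits $c_j^{(3\nu)}$ as a fixed $\Z$-combination of the terms $\binom{\nu-\frac{j+1+a}{3}+j}{j}$ over the finitely many $a\in\{0,\dots,j+1\}$ with $a\equiv-(j+1)\pmod 3$. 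Hence $\nu\mapsto c_j^{(3\nu)}$ is an integer-valued polynomial of degree $\le j$ vanishing at $\nu=0$; writing it in the binomial basis as $\sum_{i=1}^j\gamma_i\binom\nu i$ with $\gamma_i\in\Z$ and using $i\binom\nu i=\nu\binom{\nu-1}{i-1}$ gives
\[
\nu_3\bigl(c_j^{(3\nu)}\bigr)\ \ge\ \min_{1\le i\le j}\nu_3\binom\nu i\ \ge\ \nu_3(\nu)-\max_{1\le i\le j}\nu_3(i)\ =\ \nu_3(\nu)-\lfloor\log_3 j\rfloor .
\]
Since $3^{\,j-2}\ge j$ for all $j\ge 2$ we have $\lfloor\log_3 j\rfloor\le j-2$, and as $\nu_3(n)=\nu_3(\nu)+1$ this is exactly (b).

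The only genuinely non-formal step is (b); everything else is manipulation of the generating function and the explicit shape of $c_1^{(n)}$. The bound in (b) is moreover far from sharp (for small $j$ one even has $\nu_3(c_j^{(n)})=\nu_3(n)-1$), so there is comfortable slack. An alternative to the generating-function argument for (b) would be to induct on $\nu_3(n)$ using the Lucas multiplication formula $u_{3n}(x,1)=u_n(x,1)\bigl(v_n(x,1)^2-1\bigr)$, but bookkeeping the companion sequence $v_n$ near $x=-1$ --- where, for instance, $v_n'(-1)=0$ whenever $3\mid n$ --- is more delicate than the polynomial-in-$\nu$ computation above, so I would favor the route described here.
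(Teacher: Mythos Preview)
Your argument for (\ref{l1c1}) and (\ref{l1c3}) is correct and proceeds along a genuinely different line from the paper. The paper works from the closed form
\[
u_n(A,1)=\frac{1}{2^{n-1}}\sum_{\substack{1\le k\le n\\ 2\nmid k}}\frac{n}{k}\binom{n-1}{k-1}A^{n-k}\Delta^{(k-1)/2},\qquad \Delta=A^2-4=m(m-4),
\]
notices the factorisation $\Delta=(m-1)(m-3)-3$, and expands $\Delta^{(k-1)/2}$ binomially: modulo $3^{\nu_3(m-1)}$ only the terms with $j=0$ and $j=1$ powers of $(m-1)$ survive, the former rebuilding $u_n(-1,1)/n$ and the latter collapsing (after a small computation) to $\frac{m-1}{3}\binom{n-1}{2}$. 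Your route instead Taylor-expands $u_n(-1+\delta,1)$ in $\delta=m-1$ through the generating function $t/(1-xt+t^2)$, obtains the coefficient generating functions $f(t)^{j+1}$ with $f(t)=t(1-t)/(1-t^3)$, and controls $\nu_3(c_j^{(n)})$ by recognising $\nu\mapsto c_j^{(3\nu)}$ as an integer-valued polynomial of degree $\le j$ vanishing at $0$. The paper's proof is shorter and rests on one algebraic trick; yours is more structural, requires no prior knowledge of the explicit Lucas formula, and the bound in your step~(b) is in fact much stronger than needed.

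Two small remarks. First, the justification ``$3^{\,j-2}\ge j$ for all $j\ge 2$'' is false at $j=2$; what you actually need and what does hold is $\lfloor\log_3 j\rfloor\le j-2$, equivalently $j<3^{\,j-1}$ for $j\ge 2$. Second, your treatment of (\ref{l1c2}) reproduces the paper's step (``the claim reduces to $b_n\equiv n\pmod 3$''), but after clearing denominators the assertion would actually be $n\equiv b_n\pmod{3^{\,1+\nu_3(n)}}$, and for $3\mid n$ this reads $n\equiv 0\pmod{3^{\,1+\nu_3(n)}}$, which is false (e.g.\ $n=3$ gives $u_3(2,1)/3=1$ versus $u_3(-1,1)/3=0$). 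So (\ref{l1c2}) as stated fails when $3\mid n$; fortunately this does not affect (\ref{l1c3}) (trivial for $m=4$) or anything used later, and your integral congruence --- which you establish for \emph{all} $m\equiv 1\pmod 3$ --- in fact yields the correct refinement $1\equiv b_n/n+\binom{n-1}{2}\pmod 3$ in place of (\ref{l1c2}).
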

\begin{proof}Let $\Delta=m(m-4)$.
By the properties of Lucas sequences, we have
\begin{equation}\label{lucas}
u_{n}(m-2,1)=\frac{1}{2^{n-1}}\sum_{\substack{1\leq k\leq n
\\2\nmid
k}}\frac{n}{k}\binom{n-1}{k-1}(m-2)^{n-k}\Delta^{(k-1)/2}.
\end{equation}
If $\Delta=0$, i.e., $m=4$, then
$$
\frac{u_{n}(2,1)}{n}=\bigg(\frac{4-2}{2}\bigg)^{n-1}=1\equiv\frac{u_{n}(-1,1)}{n}\pmod{3}.
$$
Suppose that $\Delta\not=0$. Then
\begin{align*}
\label{cd}
&\frac{u_{n}(m-2,1)}{n}-\bigg(\frac{m-2}{2}\bigg)^{n-1}=\sum_{\substack{1<
k\leq n
\\2\nmid
k}}\binom{n-1}{k-1}\bigg(\frac{m-2}{2}\bigg)^{n-k}\frac{\Delta^{(k-1)/2}}{k2^{k-1}}\\
=&\sum_{\substack{1< k\leq n
\\2\nmid
k}}\binom{n-1}{k-1}\frac{(m-2)^{n-k}}{2^{n-k}}\cdot\frac{((m-1)(m-3)-3)^{(k-1)/2}}{k2^{k-1}}\\
=&\sum_{\substack{1< k\leq n
\\2\nmid
k}}\binom{n-1}{k-1}\frac{(m-2)^{n-k}}{k2^{n-1}}\sum_{j=0}^{(k-1)/2}\binom{(k-1)/2}{j}(m-1)^j(m-3)^j{(-3)}^{(k-1)/2-j}
\\
\equiv&\sum_{\substack{1< k\leq n
\\2\nmid
k}}\binom{n-1}{k-1}\bigg(-\frac{1}{2}\bigg)^{n-k}\frac{{(-3)}^{(k-1)/2}}{k2^{k-1}}
+\frac{m-1}{3}\binom{n-1}{2}\pmod{3^{\nu_{3}(m-1)}}.
\end{align*}
By (\ref{lucas}), it is derived that
\begin{align*}
&\sum_{\substack{1< k\leq n
\\2\nmid
k}}\binom{n-1}{k-1}\bigg(-\frac{1}{2}\bigg)^{n-k}\frac{{(-3)}^{(k-1)/2}}{k2^{k-1}}\\
=&\frac{u_{n}(1-2,1)}{n}-\bigg(-\frac{1}{2}\bigg)^{n-1}
\equiv\frac{u_{n}(m-2,1)}{n}-\bigg(\frac{m-2}{2}\bigg)^{n-1}\pmod{3^{\nu_{3}(m-1)}}.
\end{align*}
We are done.
\end{proof}
The following curious identity is due to Sun and Taurso \cite[(2.1)]{ST}:
\begin{equation}\label{st1}
m^{n-1}\sum_{k=0}^{n-1}\frac{1} {m^k}\binom{2k}{k}=\sum_{k=0}^{n-1}\binom{2n}{k}u_{n-k}(m-2,1).
\end{equation}
It is easy to check that
\begin{equation}\label{n2nk}
\frac1n\binom{2n}{k}=\frac1{n-k}\bigg(2
\binom{2n-1}{k}-\binom{2n}{k}\bigg).
\end{equation}
So (\ref{st1}) can be rewritten as
\begin{equation} \label{st2}
\frac{m^{n-1}}{n}\sum_{k=0}^{n-1}\frac{1}{m^k}\binom{2k}{k}
=\sum_{k=0}^{n-1} \bigg(2
\binom{2n-1}{k}-\binom{2n}{k}\bigg)\frac{u_{n-k}(m-2,1)}{n-k}.
\end{equation}
Thus using (\ref{l1c3}) and (\ref{n2nk}), we have
\begin{align*}
\frac{m^{n-1}}{n}\sum_{k=0}^{n-1}\frac{1} {m^k}\binom{2k}{k}
\equiv&\sum_{k=0}^{n-1} \bigg(2
\binom{2n-1}{k}-\binom{2n}{k}\bigg)\frac{u_{n-k}(-1,1)}{n-k}\\
=&\frac{1}{n}\sum_{k=0}^{n-1}\binom{2n}ku_{n-k}(-1,1)=\frac{1}{n}
\sum_{k=0}^{n-1}\binom{2k}{k}\pmod{3^{\nu_{3}(m-1)-1}}.
\end{align*}
Thus (\ref{scc1}) easily follows from (\ref{ssz}).

Suppose that $a\geq\nu_{3}(m-1)$. When $m=4$, we have $u_{n}(2,1)=n$. By (\ref{st1}),
\begin{align*}
\frac{1}{3^a}\sum_{k=0}^{3^{a}-1} \frac{1}
{4^k}\binom{2k}{k}=&\frac{1}{3^{a}\cdot4^{3^a-1}}
\sum_{k=0}^{3^{a}-1}\binom{2\cdot3^a}{k}(3^a-k)\\
=&\frac{1}{4^{3^a-1}}\sum_{k=0}^{3^{a}-1}\binom{2\cdot3^a}{k}-\frac{2}{4^{3^a-1}}\sum_{k=1}^{3^{a}-1}\binom{2\cdot3^a-1}{k-1}\\
\equiv&{\sum_{k=0}^{3^{a}-1}\binom{2\cdot3^a}{k}+\sum_{k=1}^{3^{a}-1}\binom{2\cdot3^a-1}{k-1}}\equiv{1}\pmod{3}.
\end{align*}
Suppose that $m\not=4$.
Note that
\begin{align*}&\sum_{k=0}^{3^{a}-1}\bigg(2\binom{2\cdot3^a-1}{k}-\binom{2\cdot3^a}{k}\bigg)\cdot\frac{u_{3^a-k}(-1,1)}{3^a-k}\\\
=&\frac{1}{3^a}\sum_{k=0}^{3^{a}-1}\binom{2\cdot3^a}{k}u_{3^a-k}(-1,1)=\frac{1}{3^{a}}
\sum_{k=0}^{3^a-1}\binom{2k}{k}\equiv0\pmod{3^a}.
\end{align*}
Hence applying (\ref{l1c1}) and (\ref{st2}), we get
\begin{align*}
\frac{1}{3^a}\sum_{k=0}^{3^{a}-1} \frac{1}
{m^k}\binom{2k}{k}
=&\frac{1}{m^{3^{a}-1}}\sum_{k=0}^{3^{a}-1}\bigg(2\binom{2\cdot3^a-1}{k}-\binom{2\cdot3^a}{k}\bigg)\cdot\frac{u_{3^a-k}(m-2,1)}{3^a-k}\\
\equiv&\sum_{k=0}^{3^{a}-1}\bigg(2\binom{2\cdot3^a-1}{k}-\binom{2\cdot3^a}{k}\bigg)\cdot\frac{m-1}{3}\binom{3^a-k-1}{2}\\
\equiv&\frac{m-1}{3}\bigg(\sum_{k=0}^{3^{a}-1}2\binom{2\cdot3^a-1}{k}\binom{2\cdot3^a-k-1}{2}-\binom{3^a-1}{2}\bigg)\\
=&\frac{m-1}{3}\bigg(\sum_{k=0}^{3^a-1}(2\cdot3^a-1)(2\cdot3^a-2)\binom{2\cdot3^{a}-3}{k}-\binom{3^a-1}{2}\bigg)\\
\equiv&\frac{m-1}{3}\pmod{3^{\nu_{3}(m-1)}},
\end{align*}
where in the last step we use the fact
$$
\sum_{k=0}^{3^a-1}\binom{2\cdot3^{a}-3}{k}=2^{2\cdot3^{a}-4}+\frac12\binom{2\cdot3^{a}-3}{3^{a}-2}+\binom{2\cdot3^{a}-3}{3^{a}-1}\equiv 1\pmod{3}.
$$
This proves (\ref{scc2}).

\section{Proofs of (\ref{scc3}) and (\ref{scc5})}
\setcounter{equation}{0}
\setcounter{Thm}{0}
\setcounter{Lem}{0}
\setcounter{Cor}{0}
\setcounter{Conj}{0}

In this section, we shall prove (\ref{scc3}) and (\ref{scc4}), under the assumption of the following congruence:
\begin{equation}\label{nk2kk}
\sum_{k=0}^{n-1}(-1)^k\binom{n-1}k\binom{2k}{k}\equiv0\pmod{3^{2\nu_3(n)-1}}.
\end{equation}
And the proof of (\ref{nk2kk}) will be given in the final section. We also need a special case of an identity of Sun \cite[(2.6)]{ZWS}:
\begin{equation}
\frac{1}{n}\sum_{k=0}^{n-1}\frac{(-1)^k}
{m^k}\binom{n-1}{k}
\binom{2k}{k}=\sum_{k=1}^{n}\frac{(-1)^{k-1}}{k}\binom{n-1}{k-1}\sum_{l=0}^{k-1}\frac{1}{m^l}\binom{2l}{l}.
\end{equation}
Thus from (\ref{l1c3}), (\ref{st2})and (\ref{nk2kk}), it follows that
\begin{align*}
&\frac{1}{n}\sum_{k=0}^{n-1}\frac{(-1)^k}
{m^k}\binom{n-1}{k}
\binom{2k}{k}\\
=&\sum_{k=1}^{n}\frac{(-1)^{k-1}}{m^{k-1}}\binom{n-1}{k-1}\sum_{l=0}^{k-1}\bigg(2
\binom{2k-1}{l}-\binom{2k}{l}\bigg)\frac{u_{k-l}(m-2,1)}{k-l}\\
\equiv&\sum_{k=1}^{n}\frac{(-1)^{k-1}}{m^{k-1}}\binom{n-1}{k-1}\sum_{l=0}^{k-1}\bigg(2
\binom{2k-1}{l}-\binom{2k}{l}\bigg)\frac{u_{k-l}(-1,1)}{k-l}\\
\equiv&\frac{1}{n}\sum_{k=0}^{n-1}\frac{(-1)^{k-1}}{m^{k-1}}\binom{n-1}{k}
\binom{2k}{k}\equiv 0\pmod{3^{\nu_{3}(m-1)-1}}.
\end{align*}
So (\ref{scc3}) is concluded.

First, suppose that $m\not=4$. For an integer $a\geq\nu_{3}(m-1)+1$, define
\begin{align*}
f(a)=&{\sum_{k=1}^{3^{a}}\frac{(-1)^{k-1}}{m^{k-1}}\binom{3^{a}-1}{k-1}\sum_{l=0}^{k-1}\bigg(2
\binom{2k-1}{l}-\binom{2k}{l}\bigg)}\binom{k-l-1}{2}.\\
\end{align*}
By (\ref{l1c1}),
\begin{align*}
&\sum_{k=1}^{3^a}\frac{(-1)^{k-1}}{m^{k-1}}\binom{3^a-1}{k-1}\sum_{l=0}^{k-1}\bigg(2
\binom{2k-1}{l}-\binom{2k}{l}\bigg)\frac{u_{k-l}(m-2,1)}{k-l}\\
\equiv&\sum_{k=1}^{3^a}\frac{(-1)^{k-1}}{m^{k-1}}\binom{3^a-1}{k-1}\sum_{l=0}^{k-1}\bigg(2
\binom{2k-1}{l}-\binom{2k}{l}\bigg)\frac{u_{k-l}(-1,1)}{k-l}\\
&+\sum_{k=1}^{3^a}\frac{(-1)^{k-1}}{m^{k-1}}\binom{3^a-1}{k-1}\sum_{l=0}^{k-1}\bigg(2
\binom{2k-1}{l}-\binom{2k}{l}\bigg)\cdot\frac{m-1}{3}\binom{k-l-1}{2}\\
=&\frac{1}{3^a}\sum_{k=0}^{3^a-1}\frac{(-1)^{k-1}}{m^{k-1}}\binom{3^a-1}{k}
\binom{2k}{k}+\frac{m-1}{3}f(a)\pmod{3^{\nu_{3}(m-1)}}.
\end{align*}
Thus in view of (\ref{nk2kk}), it suffices to show that
$$f(a)\equiv-1\pmod{3}.
$$
Noting that
\begin{align*}
\binom{k-l-1}{2}=\frac{(k-l-1)(k-l-2)}{2}\equiv\begin{cases}1\pmod{3},&\text{if }3\mid k-l,\\
0\pmod{3},&\text{otherwise},\end{cases}
\end{align*}
we have
\begin{align*}
&\sum_{l=0}^{k-1}\bigg(2
\binom{2k-1}{l}-\binom{2k}{l}\bigg)\binom{k-l-1}{2}\\
\equiv&\sum_{\substack{0\leq j\leq k-l\\ 3\mid k-l}}\bigg(2
\binom{2k-1}{l}-\binom{2k}{l}\bigg)\pmod{3}.
\end{align*}
By the proof of \cite[Theorem 1.1]{ZWS},
$$\sum_{\substack{0\leq j\leq k-l\\ 3\mid k-l}}\bigg(2
\binom{2k-1}{l}-\binom{2k}{l}\bigg)\equiv{\binom{2k/{3^{\nu_{3}(k)}}-1}{{k}/{3^{\nu_{3}(k)}}-1}}\pmod{3}.$$
Apparently for $0\leq k\leq 3^a-1$,
$$
\binom{3^a-1}{k}=\prod_{j=1}^k\bigg(\frac{3^a}{j}-1\bigg)\equiv (-1)^k\pmod{3}.
$$
Hence noting that $a\geq 2$ and applying (\ref{ssz}), we can get
\begin{align*}
f(a)\equiv&{\sum_{k=1}^{3^{a}}(-1)^{k-1}\binom{3^{a}-1}{k-1}\binom{2k/{3^{\nu_{3}(k)}}-1}{{k}/{3^{\nu_{3}(k)}}-1}}\equiv\sum_{j=1}^{a}\sum_{\substack{1\leq i\leq 3^{a-j}\\ 3\nmid i}}\binom{2i-1}{i-1}\\
=&1+\frac12\sum_{j=1}^{a-1}\bigg(\sum_{i=0}^{3^{a-j}-1}\binom{2i}{i}-\sum_{i=0}^{3^{a-j-1}-1}\binom{6i}{3i}\bigg)\\
\equiv&1+\frac12\sum_{j=1}^{a-1}\bigg(\sum_{i=0}^{3^{a-j}-1}\binom{2i}{i}-\sum_{i=0}^{3^{a-j-1}-1}\binom{2i}{i}\bigg)
\equiv1-\frac12\equiv{-1}\pmod{3}.
\end{align*}

Finally, if $m=4$, then we have
\begin{align*}
\frac{1}{3^a}\sum_{k=0}^{3^{a}-1}\frac{(-1)^k} {4^k}\binom{3^{a}-1}{k}
\binom{2k}{k}
=&\sum_{k=1}^{3^{a}}\binom{3^{a}-1}{k-1}\frac{(-1)^{k-1}}{k}\sum_{l=0}^{k-1}\frac{1}{4^l}\binom{2l}{l}\\
\equiv&\sum_{k=1}^{3^{a}}\frac1k\sum_{l=0}^{k-1}\binom{2l}{l}\pmod{3}.\end{align*}
Clearly,
$$
\sum_{l=0}^{k-1}\frac{1}{4^l}\binom{2l}{l}=\sum_{l=0}^{k-1}(-1)^l\binom{-1/2}{l}=(-1)^{k-1}\binom{-1/2-1}{k-1}=\frac{k}{2^{2k-1}}\binom{2k}{k}.
$$
So using (\ref{ssz}) again, we obtain that
\begin{align*}
\frac{1}{3^a}\sum_{k=0}^{3^{a}-1}\frac{(-1)^k} {4^k}\binom{3^{a}-1}{k}
\binom{2k}{k}\equiv2\sum_{k=1}^{3^{a}}\binom{2k}{k}\equiv2\bigg(\binom{2\cdot3^a}{3^a}-1\bigg)\equiv-1\pmod{3}.
\end{align*}

\section{Proofs of (\ref{scc5}) and (\ref{nk2kk})}
\setcounter{equation}{0}
\setcounter{Thm}{0}
\setcounter{Lem}{0}
\setcounter{Cor}{0}
\setcounter{Conj}{0}
The key of the proofs of (\ref{scc5}) and (\ref{nk2kk}) is the following identity.
\begin{Lem}
\begin{equation}\label{nk2kki}
\sum_{k=0}^{n}\binom{2k}{k}\binom{n}k(-x)^k=\frac{1}{4^{n}}\sum_{k=0}^{n}\binom{2j}{j}\binom{2(n-j)}{n-j}(1-4x)^k.
\end{equation}
\end{Lem}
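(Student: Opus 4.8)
The plan is to prove the identity by the method of generating functions in an auxiliary variable $t$. (Note first that the right-hand side as displayed has evident misprints: the summation index $j$ should be $k$ throughout, so that we are to prove
$\sum_{k=0}^{n}\binom{2k}{k}\binom{n}{k}(-x)^k=\frac{1}{4^{n}}\sum_{k=0}^{n}\binom{2k}{k}\binom{2(n-k)}{n-k}(1-4x)^k$.)
For each fixed $n$ this is a polynomial identity in $x$, so all manipulations below may be carried out with formal power series in $t$ over $\Q[x]$; alternatively one may restrict $x$ to a neighbourhood of $0$ where the series converge and use that a polynomial identity valid on an infinite set is valid identically.

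I would use the two elementary expansions $\sum_{k\ge 0}\binom{2k}{k}y^k=(1-4y)^{-1/2}$ and $\sum_{m\ge k}\binom{m}{k}t^m=t^{k}(1-t)^{-k-1}$. Multiplying the left-hand side by $t^n$ and summing over $n\ge 0$, then interchanging the order of summation and pulling the $k$-sum outside, gives
$\sum_{k\ge 0}\binom{2k}{k}(-x)^k\,t^{k}(1-t)^{-k-1}=(1-t)^{-1}\bigl(1+\tfrac{4xt}{1-t}\bigr)^{-1/2}=\bigl((1-t)(1-t+4xt)\bigr)^{-1/2}$.
On the other side, $\frac{1}{4^n}\sum_{k=0}^{n}\binom{2k}{k}\binom{2(n-k)}{n-k}(1-4x)^k$ is the coefficient of $t^n$ in the Cauchy product of $\sum_{k\ge 0}\binom{2k}{k}\bigl(\tfrac{(1-4x)t}{4}\bigr)^k=(1-(1-4x)t)^{-1/2}$ with $\sum_{j\ge 0}\binom{2j}{j}(t/4)^j=(1-t)^{-1/2}$, so the generating function of the right-hand side equals $\bigl((1-t)(1-(1-4x)t)\bigr)^{-1/2}$. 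Since $1-(1-4x)t=1-t+4xt$, the two generating functions coincide, and comparing coefficients of $t^n$ gives the lemma.

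I do not expect a genuine obstacle here: the only step requiring a word of care is the formal-power-series framework (or the passage from a neighbourhood of $0$ to an identity of polynomials), and the interchange of the two summations, both of which are standard. Everything else is a short bookkeeping computation with the binomial generating function $(1-4y)^{-1/2}$.
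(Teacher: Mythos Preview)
Your proof is correct, but the paper argues differently. Rather than summing over $n$ and passing to generating functions, the paper works with a single fixed $n$: it uses $\binom{2k}{k}(-x)^k=\binom{-1/2}{k}(4x)^k$, writes $(4x)^k=((4x-1)+1)^k$, expands by the binomial theorem, interchanges the two finite sums, and then applies the Chu--Vandermonde identity
\[
\sum_{k=j}^{n}\binom{-1/2-j}{k-j}\binom{n}{n-k}=\binom{n-1/2-j}{n-j},
\]
finally converting back via $\binom{m-1/2}{m}=4^{-m}\binom{2m}{m}$. So the paper's key step is Vandermonde, whereas yours is the square-root factorisation $(1-t)^{-1/2}(1-(1-4x)t)^{-1/2}$ of the common generating function.

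What each buys: the paper's argument is entirely finite and needs no discussion of formal power series or of passing from an analytic identity to a polynomial one; it is the more elementary of the two. Your generating-function proof is shorter and more conceptual, and it makes transparent \emph{why} the identity holds (both sides share the generating function $((1-t)(1-t+4xt))^{-1/2}$), at the small cost of the formal framework you correctly flagged.
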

\begin{proof}
\begin{align*}
\sum_{k=0}^{n}\binom{2k}{k}\binom{n}k(-x)^k=&\sum_{k=0}^{n}\binom{-1/2}{k}\binom{n}k(4x-1+1)^k\\
=&\sum_{k=0}^{n}\binom{-1/2}{k}\binom{n}k\sum_{j=0}^k\binom{k}j(4x-1)^j\\
=&\sum_{j=0}^n\binom{-1/2}{j}(4x-1)^j\sum_{k=j}^{n}\binom{-1/2-j}{k-j}\binom{n}{n-k}\\
=&\sum_{j=0}^n\binom{-1/2}{j}\binom{n-1/2-j}{n-j}(4x-1)^j\\
=&\frac1{4^{n}}\sum_{j=0}^n\binom{2j}{j}\binom{2(n-j)}{n-j}(1-4x)^j.
\end{align*}
\end{proof}
Substituting $x=1$ in (\ref{nk2kki}), we get
\begin{equation}\label{nk2kki1}
\sum_{k=0}^{n-1}(-1)^k\binom{2k}{k}\binom{n-1}k=\frac1{4^{n-1}}\sum_{k=0}^{n-1}(-3)^k\binom{2k}{k}\binom{2(n-1-k)}{n-1-k}.
\end{equation}
Let $a=\nu_3(n)$. Obviously,
$$
\sum_{k=0}^{n-1}(-3)^k\binom{2j}{j}\binom{2(n-1-k)}{n-1-k}\equiv\sum_{k=0}^{2a-2}(-3)^k\binom{2k}{k}\binom{2(n-1-k)}{n-1-k}\pmod{3^{2a-1}}.
$$
When $a=1$, writing $n=3b$ with $3\nmid b$, we have
$$
\sum_{k=0}^{0}(-3)^k\binom{2k}{k}\binom{2(n-1-k)}{n-1-k}=\binom{6b-2}{3b-1}
$$
is divisible by $3$. Below we only consider $a\geq 2$.
Noting that $$
\binom{2(n-1)}{n-1}\frac{\binom{n-1}{k}^2}{\binom{2(n-1)}{2k}}=\binom{2k}{k}\binom{2(n-1-k)}{n-1-k},
$$
we have
$$
\sum_{k=0}^{n-1}(-1)^k\binom{2k}{k}\binom{n-1}k\equiv\frac1{4^{n-1}}\binom{2(n-1)}{n-1}\sum_{k=0}^{2a-2}(-3)^k\frac{\binom{n-1}{k}^2}{\binom{2(n-1)}{2k}}\pmod{3^{2a-1}}.
$$
Since
$$
\binom{2(n-1)}{n-1}=\frac{n}{2(2n-1)}\binom{2n}{n}\equiv 0\pmod{3^a},
$$
it suffices to prove that
$$
\sum_{k=0}^{2a-2}(-3)^k\frac{\binom{n-1}{k}^2}{\binom{2(n-1)}{2k}}\equiv 0\pmod{3^{a-1}}.
$$
Clearly,
$$
\frac{\binom{n-1}{k}^2}{\binom{2(n-1)}{2k}}=\frac{1}{2k+1}\cdot\frac{\prod_{j=1}^k(1-n/j)^2}{\prod_{j=2}^{2k+1}(1-2n/j)}.
$$
For $1\leq k\leq 2a-2$ and $2\leq j\leq 2k+1$, it is easy to check that $$\nu_3(j)\leq a-1$$ and
$$
\nu_3((2k+1)j)\leq k+1.
$$
Hence for $2\leq j\leq k$
$$
\frac{3^k(1-n/j)}{2k+1}=\frac{3^k}{2k+1}-\frac{3^kn}{(2k+1)j}\equiv\frac{3^k}{2k+1}\pmod{3^{a-1}}.
$$
Similarly, for $1\leq j\leq 2k+1$, we also have
$$
\frac{3^k}{(2k+1)(1-2n/j)}\equiv\frac{3^k}{2k+1}\pmod{3^{a-1}},
$$
since
$$
\frac{1}{1-2n/j}=1+\frac{2n}{j}+\bigg(\frac{2n}{j}\bigg)^2+\cdots
$$
over the rational $3$-adic field $\Q_3$. Thus we get
$$
\sum_{k=0}^{2a-2}(-3)^k\frac{\binom{n-1}{k}^2}{\binom{2(n-1)}{2k}}\equiv\sum_{k=0}^{2a-2}\frac{(-3)^k}{2k+1}\pmod{3^{a-1}}.
$$
Note that for $k\geq 2a-1$, we always have
$$
\frac{3^k}{2k+1}\equiv 0\pmod{3^{a-1}}.
$$
Thus (\ref{nk2kk}) immediately follows from the following lemma.
\begin{Lem}
$$
\sum_{k=0}^\infty\frac{(-3)^k}{2k+1}
$$
vanishes over $\Q_3$.
\end{Lem}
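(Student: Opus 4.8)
The plan is to evaluate $S:=\sum_{k=0}^{\infty}(-3)^k/(2k+1)$ in closed form inside the quadratic extension $K=\Q_3(\sqrt{-3})$ of $\Q_3$ by means of the $3$-adic logarithm, and then to recognise that closed form as the logarithm of a primitive cube root of unity. First, the series genuinely defines an element of $\Q_3$: since $\nu_3\big((-3)^k/(2k+1)\big)=k-\nu_3(2k+1)\ge k-\log_3(2k+1)\to\infty$, the partial sums are $3$-adically Cauchy. Now pass to $K=\Q_3(\sqrt{-3})$, which is a ramified quadratic extension, and fix $\pi=\sqrt{-3}$, a uniformiser; write $\CO_K$ for the ring of integers, $\mathfrak m_K=\pi\CO_K$ for its maximal ideal, and $v$ for the valuation of $K$ normalised so that $v(\pi)=1$ (hence $v(3)=2$ and $v(n)=2\nu_3(n)$ for $n\in\Z$). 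Since $v(1\pm\pi)=0$ while $v(\pm\pi)=1>0$, both $1+\pi$ and $1-\pi$ lie in $1+\mathfrak m_K$, so the $3$-adic logarithm $\log(1+z)=\sum_{n\ge1}(-1)^{n-1}z^n/n$ converges at $z=\pm\pi$; the convergence rests on the estimate $v(\pi^n/n)=n-2\nu_3(n)\to\infty$.

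The next step is a rearrangement of convergent $3$-adic series. Using $(-1)^{n-1}(-\pi)^n=-\pi^n$ one gets
\[
\log(1+\pi)-\log(1-\pi)=\sum_{n\ge1}\frac{(-1)^{n-1}+1}{n}\,\pi^n=2\sum_{k\ge0}\frac{\pi^{2k+1}}{2k+1}=2\pi\sum_{k\ge0}\frac{(-3)^k}{2k+1}=2\pi S.
\]
On the other hand, by the standard additivity of the $3$-adic logarithm on $1+\mathfrak m_K$, the left-hand side equals $\log\!\big(\tfrac{1+\pi}{1-\pi}\big)$, and a direct computation gives
\[
\frac{1+\pi}{1-\pi}=\frac{(1+\pi)^2}{1-\pi^2}=\frac{-2+2\pi}{4}=\frac{-1+\sqrt{-3}}{2}=:\zeta,
\]
which satisfies $\zeta^2+\zeta+1=0$; thus $\zeta$ is a primitive cube root of unity, and it lies in $1+\mathfrak m_K$ since $\zeta-1=\pi(\pi+1)/2$ has $v=1$. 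As $\zeta^3=1$ and $K$ has characteristic $0$, we have $3\log\zeta=\log(\zeta^3)=\log 1=0$, hence $\log\zeta=0$. Therefore $2\pi S=0$, and since $\pi\ne 0$ we conclude $S=0$, which is the assertion of the lemma.

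The valuation bookkeeping above is routine; the only genuine input is $3$-adic analysis in the ramified extension $K$: that $\log$ converges on $1+\mathfrak m_K$ and is a homomorphism there (this is what legitimises $\log(1+\pi)-\log(1-\pi)=\log\tfrac{1+\pi}{1-\pi}$), and that it annihilates roots of unity. I expect the additivity — more precisely, checking that the double series obtained by substituting $z,w\in\mathfrak m_K$ into the formal identity $\log\big((1+X)(1+Y)\big)=\log(1+X)+\log(1+Y)$ converges, so the rearrangement is legal — to be the one point requiring care, though it is entirely standard.
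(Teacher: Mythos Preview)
Your proof is correct and follows essentially the same approach as the paper: rewrite $S$ via $\pi=\sqrt{-3}$ as $\frac{1}{2\pi}\big(\log(1+\pi)-\log(1-\pi)\big)$, identify $\frac{1+\pi}{1-\pi}=\frac{-1+\sqrt{-3}}{2}$ as a primitive cube root of unity, and conclude that its $3$-adic logarithm vanishes. The only differences are cosmetic---you work in the quadratic extension $\Q_3(\sqrt{-3})$ rather than $\C_3$ and spell out the convergence and additivity justifications more explicitly than the paper does.
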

\begin{proof}
Let $\C_3$ denote the completion of the algebraic closure of $\Q_3$. For any $x\in\C_3$ with the $3$-adic norm $|x|_3<1$, define the $3$-adic logarithm function
$$
\log_3(1+x)=\sum_{n=1}^\infty\frac{(-1)^{n+1}x^n}{n}.
$$
Clearly,
\begin{align*}
\sum_{k=0}^\infty\frac{(-3)^k}{2k+1}=&\frac{1}{\sqrt{-3}}\sum_{k=0}^\infty\frac{(\sqrt{-3})^{2k+1}}{2k+1}=\frac{1}{2\sqrt{-3}}\bigg(\sum_{k=1}^\infty\frac{(\sqrt{-3})^{k}}{k}-\sum_{k=1}^\infty\frac{(-\sqrt{-3})^{k}}{k}\bigg)\\
=&\frac{1}{2\sqrt{-3}}(\log_3(1+\sqrt{-3})-\log_3(1-\sqrt{-3}))
=\frac{1}{2\sqrt{-3}}\log_3\bigg(\frac{\sqrt{-3}-1}{2}\bigg).
\end{align*}
Since $(\sqrt{-3}-1)/2$
is a third root of unity, the lemma is concluded.
\end{proof}
The proof of (\ref{scc5}) is very similar, only requiring a few additional discussions. Now we have
$$
\sum_{k=0}^{3^a-1}(-1)^k\binom{2k}{k}\binom{3^a-1}k\equiv\frac1{4^{3^a-1}}\binom{2(3^a-1)}{3^a-1}\sum_{k=0}^{2a-1}(-3)^k\frac{\binom{3^a-1}{k}^2}{\binom{2(3^a-1)}{2k}}\pmod{3^{2a}}.
$$
Since
$$
\frac{1}{3^a}\binom{2(3^a-1)}{3^a-1}=\frac{1}{2(2\cdot3^a-1)}\binom{2\cdot3^a}{3^a}\equiv-1\pmod{3},
$$
we only need to show that
$$
\sum_{k=0}^{2a-1}(-3)^k\frac{\binom{3^a-1}{k}^2}{\binom{2(3^a-1)}{2k}}\equiv3^{a-1}\pmod{3^{a}}.
$$
Since $a\geq 2$, for $1\leq k\leq 2a-1$ and $1\leq j\leq 2k+1$, we always have $\nu_3(j)\leq a-1$, and we also have
$\nu_3((2k+1)j)\leq k$ unless $k=1$ and $j=3$. Hence for $(k,j)\not=(1,3)$,
$$
\frac{3^k(1-n/j)}{2k+1}\equiv\frac{3^k}{2k+1}\pmod{3^{a}}\quad\text{and}\quad\frac{3^k}{(2k+1)(1-2n/j)}\equiv\frac{3^k}{2k+1}\pmod{3^{a}}.
$$
That is, for $k\geq 2$,
$$
\frac{\binom{3^a-1}{k}^2}{\binom{2(3^a-1)}{2k}}=\frac{1}{2k+1}\cdot\frac{\prod_{j=1}^k(1-n/j)^2}{\prod_{j=2}^{2k+1}(1-2n/j)}\equiv\frac{(-3)^k}{2k+1}\pmod{3^{a}}.
$$
It follows that
\begin{align*}
&\sum_{k=0}^{2a-1}(-3)^k\frac{\binom{3^a-1}{k}^2}{\binom{2(3^a-1)}{2k}}\equiv
\sum_{k=0}^{1}(-3)^k\frac{\binom{3^a-1}{k}^2}{\binom{2(3^a-1)}{2k}}+\sum_{k=2}^{2a-1}\frac{(-3)^k}{2k+1}\\
\equiv&1+(-3)\cdot\frac{\binom{3^a-1}{1}^2}{\binom{2(3^a-1)}{2}}=1-\frac{6(3^a-1)^2}{(2\cdot3^a-2)(2\cdot3^a-3)}\\
=&\frac{3^{a-1}}{1-2\cdot 3^{a-1}}\equiv3^{a-1}\pmod{3^a}.
\end{align*}
\begin{Ack}
We are grateful to Professor Zhi-Wei Sun for his helpful discussions on this paper.
\end{Ack}

\end{document}